\newcommand{\R}{\mathds{R}}
\newcommand{\A}{\mathds{A}}
\newcommand{\1}{\mathds{1}}
\newcommand{\N}{\mathds{N}}
\let\phi\varphi
\newcommand{\Z}{\mathds{Z}}
\newcommand{\sB}{\mathcal{B}}
\newcommand{\sM}{\mathcal{M}}
\newcommand{\sW}{\mathcal{W}}
\newcommand{\sX}{\mathcal{X}}
\newcommand{\sF}{\mathcal{F}}
\newtheorem{definition}{Definition}
\newtheorem{theorem}{Theorem}
\newtheorem{remark}{Remark}
\newtheorem{example}{Example}
\newtheorem{lemma}{Lemma}
\title{\LARGE \bf
Ergodicity of Controlled Stochastic Nonlinear Systems under Information Constraints: Refined Bounds via Splitting
}
\author{Nicolas Garcia$^{1}$, Christoph Kawan$^{2}$, and Serdar Y\"uksel$^{3}$
\thanks{*The work of the second author is supported by the German Research Foundation (DFG)
through grant ZA 873/4-1.}
\thanks{$^{1}$Is with the Department of Operations Research and Financial Engineering at Princeton University, Princeton NJ, USA.
        {\tt\small ng6303@princeton.edu }}%
\thanks{$^{2}$Is with the Institute of Informatics at the LMU Munich, Germany.
        {\tt\small christoph.kawan@lmu.de}}%
\thanks{$^{3}$Is with the Department of Mathematics and Statistics at Queen's University, Kingston ON, Canada.
        {\tt\small yuksel@queensu.ca}}%
}
\begin{document}

\maketitle
\thispagestyle{empty}
\pagestyle{empty}

\begin{abstract}
This paper considers the problem of stabilizing a discrete-time non-linear stochastic system over a finite capacity noiseless channel. Our focus is on systems which decompose into a stable and unstable component, and the stability notion considered is asymptotic ergodicity of the $\R^N$-valued state process. We establish a necessary lower bound on channel capacity for the existence of a coding and control policy which renders the closed-loop system stochastically stable. In the literature, it has been established that under technical assumptions, the channel capacity must not be smaller than the logarithm of the determinant of the system linearization, averaged over the noise and ergodic state measures. In this paper, we establish that for systems with a stable component, it suffices to consider only the unstable dimensions, providing a refinement on the general channel capacity bound for a large class of systems. The result is established using the notion of stabilization entropy, a notion adapted from invariance entropy, used in the study of noise-free systems under information constraints.	

\end{abstract}

\section{INTRODUCTION}

In the field of control under communication constraints, a commonly studied problem is to characterize the minimum amount of information required by a controller in order to achieve a given control task. In this paper we consider the above problem for discrete-time non-linear stochastic systems with additive control. The control objective considered is to render the state process stochastically stable for the stability criterion of asymptotic ergodicity. More precisely, we consider the system
\begin{align} \label{system}
x_{t+1} = f(x_t,w_t)+Bu_t
\end{align}
where $x_t,w_t$ and $u_t$ are the state, noise, and control at time $t$ respectively and $B$ is an appropriately sized matrix. Additionally, we impose that the state information  travel through a finite capacity noiseless channel at each time step before reaching the controller, as depicted in Figure \ref{LLL1}. We formalize the notion of a coding and control policy as follows. First, let $\sM  \coloneqq  \{1,2,..,M\}$ denote the alphabet of the channel, thus its capacity in bits is given by $C \coloneqq \log_2 M$. At time $t$, the coder (also known as the encoder) generates a channel input $q_t$ from past state realizations $x_0,..,x_t$. The channel input $q_t \in \sM$ is therefore determined by a map $\gamma_t^e:(\R^N)^{t+1} \to \{1,2,\ldots,M\}$. The symbol $q_t$ is transmitted over the channel, reaching the controller. The controller generates $u_t$ based on channel outputs $q_0,..,q_t$ according to a map $\gamma_t^c : \sM^{t+1} \to \R^N$. A coding and control policy is therefore a pair of maps $(\gamma_t^e)_{t \in \N}$ and $(\gamma_t^c)_{t \in \N}$. Once we fix a coding and control policy, $(x_t)_{t \in \N}$ is a well defined autonomous stochastic process, with randomness coming from the possibly random initial state $x_0$, and the noise process $(w_t)_{t \in \N}$. 
\begin{figure}[h]
\begin{center}
\includegraphics[height=3.5cm,width=8.5cm]{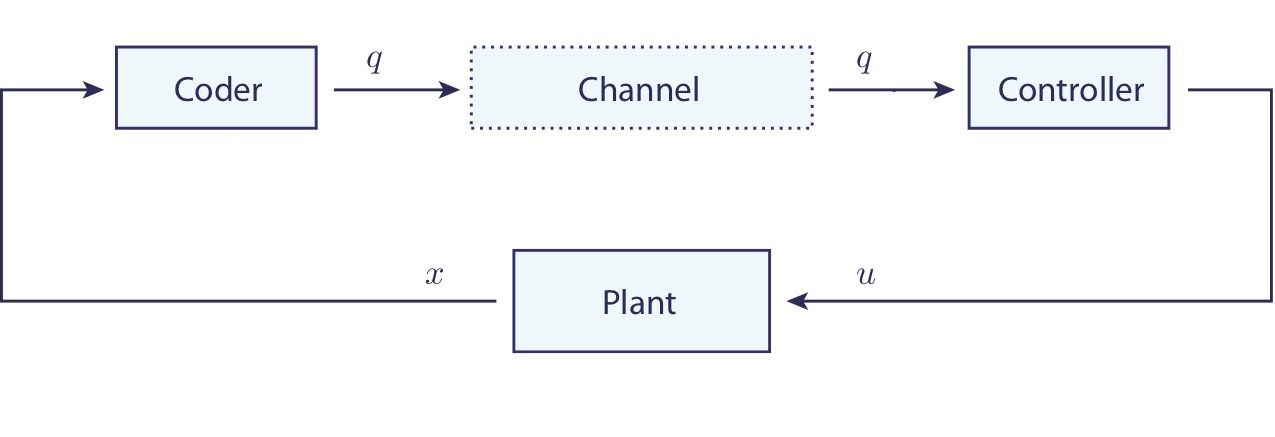}
\caption{System Controlled Over Communication Channel \label{LLL1}}
\end{center}
\end{figure}

In this paper, we establish a necessary lower bound on channel capacity required for the existence of coding and control policies which result in the state process $(x_t)_{t \in \N}$ being asymptotically ergodic. The lower bound is related to the asymptotic mean (defined in the next section), i.i.d. noise law, and system dynamics function. We now discuss the stability notion in detail, but first provide some notational conventions.

\subsection{Notation}
Throughout this paper, $\Z$ denotes the integers, $\R$ the real numbers, and $\N$ the non-negative integers. The Lebesgue measure is denoted by $m$, where the dimension will be clear from context. A discrete interval in the integers will be denoted by $[a;b]$ (i.e., $[a;b] = \{a,a+1,\ldots,b-1,b\}$ for $a \leq b$ in $\Z$). Given a topological space $\sX$, $\sB(\sX)$ denotes its Borel $\sigma$-algebra. For a function $f:\R^n \rightarrow \R^m$, we denote the Jacobian (matrix of partial derivatives) by $Df$. We use $\sqcup$ to emphasize that a union in question is disjoint. When applied to a set, $|\cdot|$ denotes cardinality. Given a sequence $x \coloneqq (x_n)_{n \in \N}$ taking values in a given space, $\theta$ denotes the left shift map, so that $(\theta x)_n = x_{n+1}$ for every $n \in \N$. Given a topological space $\sX$, we let $\sX^{\N}$ denote the set of sequences indexed by $\N$, taking values in $\sX$. We endow  $\sX^{\N}$ with the product topology. 

\subsection{Asymptotic Ergodicity}
In this section we discuss the stochastic stability notion considered in this paper. Let $(\Omega,\sF,P)$ denote the common probability space on which all random variables are defined. Note that fixing a coding and control policy and specifying an initial state distribution for the system (\ref{system}) results in an autonomous state process $(x_t)_{t \in \N}$ which induces a process measure on $\sB((\R^N)^{\N})$. Let us denote this measure by $\mu$. 

\begin{definition} \label{stochstabilitydef}
Consider the process $(x_t)_{t \in \N}$ with process measure $\mu$ as above. We say it is:
\begin{itemize}
\item \emph{stationary} iff $\mu(\theta^{-1}(A)) = \mu(A)$ for all $A \in \sB((\R^N)^{\N})$.%
\item \emph{asymptotically mean stationary (AMS)} iff there exists a probability measure $Q$ (called the asymptotic mean of the process) on $\sB((\R^N)^{\N})$ such that%
\begin{equation*}
    \lim\limits_{T\rightarrow \infty} \frac{1}{T}\sum_{k=0}^{T-1} \mu(\theta^{-k}(A)) = Q(A) \quad \mbox{for every\ } A \in \sB((\R^N)^{\N}).%
\end{equation*}
\item \emph{ergodic} iff it is stationary, and for $A \in \sB((\R^N)^{\N})$ we have that $A = \theta^{-1}(A) \implies \mu(A) \in 
\{0,1\}$.
\item \emph{AMS ergodic} iff it is AMS, and the asymptotic mean is ergodic.
\end{itemize}
\end{definition}
Note that if a process is \emph{AMS}, then the asymptotic mean is a stationary measure on the sequence space. Note also that a stationary measure on $\sB((\R^N)^{\N})$ can unambiguously be projected to a measure on $\sB(\R^N)$. By slight abuse of notation, we do not distinguish between a stationary measure on the sequence space and its projected coordinate measure, as the measure in question will be clear from context. In this paper, the stability notion that we will consider is \emph{AMS ergodicity} (or informally, \emph{asymptotic ergodicity}). Ergodicity allows us to take advantage of the following pointwise ergodic theorem:

\begin{theorem}
(Pointwise Ergodic Theorem) Let $(\Omega,\sF,P,T)$ be an ergodic system. Then for any $f \in L^1(\Omega,\sF,P)$ we have%
\begin{equation*}
  \frac{1}{N}\sum_{k=0}^{N-1} f \circ T^k \xrightarrow[N \rightarrow \infty]{a.s} \int f d P.%
\end{equation*}
\end{theorem}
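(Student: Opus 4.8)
This is Birkhoff's pointwise ergodic theorem. The plan is to prove it in two stages: first establish that the time averages $A_n f \coloneqq \frac{1}{n}\sum_{k=0}^{n-1} f \circ T^k$ converge almost surely to some $T$-invariant limit function $f^*$, and then invoke ergodicity to identify $f^*$ with the constant $\int f\, dP$. The second stage is immediate once the first is in hand: any $T$-invariant integrable function is constant $P$-a.s.\ when $T$ is ergodic, since its sub-level sets are $T$-invariant and hence of measure $0$ or $1$; and because $T$ preserves $P$ we have $\int A_n f\, dP = \int f\, dP$ for every $n$, so the constant value of $f^*$ must equal $\int f\, dP$.

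The heart of the matter is therefore the almost-sure convergence in the first stage, and the main obstacle is the Maximal Ergodic Theorem, which I would prove first as a lemma. In the form I would use it, it states: if $g \in L^1(\Omega,\sF,P)$ and $M_n \coloneqq \max_{0 \le k \le n}\sum_{j=0}^{k-1} g\circ T^j$ (with the empty sum, $k=0$, equal to $0$, so that $M_n \ge 0$), then $\int_{\{M_n > 0\}} g\, dP \ge 0$; letting $n \to \infty$ and setting $E \coloneqq \{\sup_{k \ge 1} \sum_{j=0}^{k-1} g \circ T^j > 0\}$, one obtains $\int_E g\, dP \ge 0$. I would establish this via Garsia's short argument, comparing the partial maxima of the Birkhoff sums before and after applying $T$ and exploiting that $T$ is measure-preserving.

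With the maximal inequality available, I would set $\overline{f} \coloneqq \limsup_n A_n f$ and $\underline{f} \coloneqq \liminf_n A_n f$; both are $T$-invariant and measurable. For rationals $\alpha < \beta$, the set $E_{\alpha,\beta} \coloneqq \{\underline{f} < \alpha\} \cap \{\overline{f} > \beta\}$ is $T$-invariant, so applying the maximal inequality to $f - \beta$ and to $\alpha - f$ restricted to $E_{\alpha,\beta}$ yields $\int_{E_{\alpha,\beta}} f\, dP \ge \beta\, P(E_{\alpha,\beta})$ and $\int_{E_{\alpha,\beta}} f\, dP \le \alpha\, P(E_{\alpha,\beta})$. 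Since $\alpha < \beta$, these force $P(E_{\alpha,\beta}) = 0$, and taking the union over all rational pairs $\alpha < \beta$ shows $\overline{f} = \underline{f}$ a.s., so $f^* \coloneqq \lim_n A_n f$ exists almost surely. Fatou's lemma gives $f^* \in L^1$, and to conclude $\int f^*\, dP = \int f\, dP$ I would first treat bounded $f$, where dominated convergence applies directly, and then pass to general $f \in L^1$ by $L^1$-approximation, controlling the averaging operator through its $L^1$-contraction property.

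The step I expect to be the genuine obstacle is the Maximal Ergodic Theorem itself: the combinatorial comparison of Birkhoff sums underlying Garsia's proof is the one place where a non-routine manipulation is required, whereas the squeezing argument and the final identification of the limit are essentially bookkeeping once that inequality is secured. An alternative route through von Neumann's mean ergodic theorem would deliver convergence only in $L^2$ (or $L^1$), not the pointwise almost-sure statement required here, so I would avoid it.
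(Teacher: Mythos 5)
Your proposal is correct and matches the paper's treatment: the paper does not prove this classical result itself but defers to \cite[Theorem 1.14]{walters2000introduction}, and the argument given there is precisely the route you outline — Garsia's proof of the maximal ergodic theorem, the $\limsup/\liminf$ squeeze over rational pairs $\alpha < \beta$ on the invariant sets $E_{\alpha,\beta}$, and identification of the invariant limit with $\int f\, dP$ via ergodicity together with the bounded-case/$L^1$-approximation step. No gaps to report.
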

For a proof, see \cite[Theorem 1.14]{walters2000introduction}.

Suppose that the stochastic process in Definition \ref{stochstabilitydef} is \emph{AMS ergodic} with process measure $\mu$ and asymptotic mean $Q$. The application of the pointwise ergodic theorem to the $L^1$ map $\1_{x_0 \in B} : (\R^N)^{\N} \to \{0,1\}$ for $B \in \sB(\R^N)$ yields 
\begin{align}\label{spce}
  \mu\Bigl(\Bigl\{x \in (\R^N)^{\N} : \lim_{T \to \infty}\frac{1}{T}\sum_{k=0}^{T-1}\1_{B}(x_k) = Q(B) \Bigr\}\Bigr) = 1,%
\end{align}
In principle, the pointwise ergodic theorem tells us that the above set has full measure with respect to $Q$. From \cite[Lem. 7.5 and Eq. (7.22)]{gray2009probability} it turns out that $\mu$ and $Q$ agree on all $Q$-trivial sets, thus allowing one to write (\ref{spce}). The above equation which provides almost sure guarantees on asymptotic sample path behavior will be crucial in the results of this paper, for which we now provide some motivating examples.  

\subsection{Problem Motivation}
Suppose that system (\ref{system}) is controlled over a noiseless channel of finite capacity $C$, and made to be asymptotically ergodic with AMS mean $Q$. Then under slightly different technical assumptions, \cite{garcia2021ergodicity} and \cite{yuksel2016stationary} establish that
\begin{align} \label{oldbound}
\int \int \log_2|\det Df_w(x)|dQ(x)d\nu(w) \leq C
\end{align}
where $\nu$ is the distribution of the i.i.d. noise and $f_w$ denotes the map $x \mapsto f(x,w)$ for some fixed $w \in \sW$. This bound however is in general not tight, as the following two examples illustrate. 
\begin{example} \label{ex1bd}
Consider the two dimensional linear system given by
\begin{align} 
 \begin{bmatrix} x_{t+1} \\ y_{t+1} \end{bmatrix} = 
 \begin{bmatrix} 2 & 0 \\ 0 & 1/2 \end{bmatrix} 
 \begin{bmatrix} x_{t} \\ y_{t} \end{bmatrix}+ w_t + u_t
\end{align}
where $u_t$ and $w_t$ take values in $\R^2$ and the noise is i.i.d with zero mean. The LHS of (\ref{oldbound}) is easily seen to equal zero for this system, thus providing a vacuous bound on channel capacity. It is well known from the literature however that a tight bound on linear systems is the log-sum of the unstable eigenvalues. Note that by replacing $1/2$ in the above matrix with any number no smaller than one, the bound in (\ref{oldbound}) recovers the tight linear bound. As we will see, the refinement of the channel capacity bound in this paper will recover the tight bound in the general linear case (thus, also with stable eigenvalues). Consider now a second example:  

\end{example}

\begin{example} \label{ex2bd}
Consider the system $(x_t,y_t)_{t \in \N}$ in $\R^2$ evolving with scalar-valued i.i.d. noise according to
\begin{align}
\begin{bmatrix} x_{t+1} \\ y_{t+1} \end{bmatrix} = \begin{bmatrix} (x_t^3 + x_t)(1+y_t^2) \\ \frac{1}{2}y_t + w_t \end{bmatrix}+ u_t
\end{align}
with $x_0$ and $y_0$ independent and admitting bounded densities. We note that the $y$-component of the above system is stochastically stable. Moreover, the presence of the $y$-term in the dynamics of the $x$-component cannot be modeled as noise, as the i.i.d. assumption required in data-rate theorems of non-linear systems is not satisfied. Suppose the above system is made asymptotically ergodic via a coding and control policy with AMS mean $Q$. We compute
\begin{align}
Df_w(x,y) = 
\begin{bmatrix}
(3x^2 + 1)(1+y^2) & (x^3 + x)2y  \\
0 & 1/2  
\end{bmatrix}
\end{align}
and apply (\ref{oldbound}) to obtain 
\begin{align}
\int \log_2|\frac{(1+y^2)}{2}(3x^2 + 1)|dQ(x,y) \leq C.
\end{align}
Note that there is a factor of $1/2$ coming from the stable second component in the integrand. It seems sensible that the bound should hold without this factor, as the coding and control policy need not be concerned with the stochastically stable component. Indeed, the result in this paper establishes that the above bounds holds when removing the factor of $1/2$ and is therefore a strict refinement for certain systems. We now move on to a literature review, followed by our main result and its proof. 
\end{example}

\section{Literature Review and Information Requirements for Stochastic Stability}
The presence of real-world control problems where perfect and instantaneous state information is not necessarily available to a controller has motivated the field of control under communication constraints. In this field, one wishes to study if and how it is possible to accomplish a control task under varying degrees of imperfect information. A ubiquitous problem in the field is to characterize minimum data rates required to stabilize a dynamical system. This problem has been considered extensively for linear deterministic and stochastic systems, for which one can usually characterize the minimum data rate required for closed-loop stability as the log-sum of the unstable open-loop eigenvalues. 

Some related earlier papers considering the linear case include \cite{hespanha2002towards}, \cite{delchamps1990stabilizing}, \cite{Elia2001}, \cite{nair2004stabilizability}, \cite{tatikonda2004control}, \cite{wong1999systems} and \cite{banbas89}. More recent contributions include \cite{MatveevSavkin,SahaiParts,minero2012stabilization,kostina2018exact,Martins2008,NairFagnaniSurvey}, and \cite{YukTAC2010,YukMeynTAC2010,johnston2013stochastic,YukselAMSITArxiv,YukselBasarBook} where this latter group of papers presented necessary and sufficient conditions for stability criteria such as existence of invariant measures, positive Harris recurrence and (asymptotic) ergodicity. There has been a separate line of work for the special Gaussian channel setup, which we do not review in this paper.
 
For non-linear systems, however, the majority of papers have focused on deterministic systems. Some early works include \cite{liberzon2005stabilization}, where it was established that global asymptotic stabilization of a non-linear continuous time system is feasible provided that data rates exceed a quantity related to system dimension and a Lipschitz constant, and \cite{de2005n} where non-linear feed-forward systems were considered. In \cite{nair2004topological} the authors presented the first systematic approach for determining minimal data rates for stabilization and introduced the notion of topological feedback entropy, a notion inspired by the classical open cover definition of topological entropy in dynamical systems due to Adler et al. \cite{adler1965topological}. It was established in \cite{nair2004topological} that a necessary and sufficient condition for stabilization to a compact set is the condition that the data rate in the control loop exceeds the topological feedback entropy. For the same stabilization problem, invariance entropy was introduced in \cite{colonius2009invariance}. This notion serves as a way to quantify the difficulty of a control task through the minimum number of open loop control sequences required to achieve it. The monograph \cite{kawan2013invariance} provides a detailed account of the applications of invariance entropy in determining minimum data rates, particularly for continuous time (non-linear) systems. In \cite{colonius2013note}, it was further established that under a strong invariance condition, the notions of topological feedback entropy and invariance entropy coincide in the discrete time case. A recent related development was the introduction of metric invariance entropy in \cite{colonius2018metric}. Many more interesting results have been obtained under a wealth of setting, and we refer the reader to \cite{nair2007feedback} and \cite{matveev2009estimation} for a more detailed overview of the literature. 

To the best of our knowledge, the first converse result on channel capacity for non-linear stochastic systems was established in \cite{yuksel2016stationary} using information theoretic methods. The paper provided lower bounds on channel capacity necessary for stochastic stabilization of discrete time non-linear systems over both noisy and noiseless channels for stability notions of ergodicity and entropy growth conditions. With a fundamentally different approach via stochastic growth properties, for the ergodic case a similar result was established in \cite{garcia2021ergodicity}, which relied instead on stabilization entropy. This notion, introduced in \cite{kawan2019invariance}, was a modification of invariance entropy for the stochastic case and was first used to obtain lower bounds on channel capacity required for AMS stability. The paper at hand builds on the techniques involving stabilization entropy, and provides a refinement for the lower bound in \cite[Theorem 4.1]{garcia2021ergodicity} and \cite[Theorem 4.2]{yuksel2016stationary} for the stability notion of (asymptotic) ergodicity. In particular, the statement of the result in this paper resembles \cite[Theorem 4.1]{garcia2021ergodicity} and a similar approach is used. Our main result is a strict refinement, and the proof requires a modification of stabilization entropy.

\section{Result}
Consider a subset $p \subseteq \{1,..,N\}$ of indices listed in increasing order as $p_1 < p_2 < \cdot \cdot \cdot < p_{|p|}$. Let $z_1 < \cdot \cdot \cdot < z_{N-|p|}$ denote the elements in $\{1,..,N\} \setminus p$. We define the permutation $\psi_p : \R^N \to \R^N$ by
\begin{align*}
\psi_p(x_1,...,x_N)_i = 
    \begin{cases} 
      x_{p_i} & i \leq |p| \\
      x_{{z}_{i-|p|}} & i > |p| 
    \end{cases}
\end{align*}
for $i \in \{1,..,N\}$. Also, let $\pi_p : \R^N \to \R^{|p|}$ denote the natural projection of coordinates $p_1,..,p_{|p|}$. For a map $f:\R^N \to \R^N$, a set $p$ as above, and a fixed vector $(y_1,..,y_{N-|p|})$ we define the map $f^p(\cdot,y_1,..,y_{N-|p|}):\R^{|p|} \to \R^{|p|}$ by
\begin{align}
f^p(x,y_1,..,y_{N-|p|}) \coloneqq \pi_p(f(\psi_p^{-1}(x,y_1,..,y_{N-|p|})))
\end{align}
where $x \in \R^{|p|}$. As an example, consider $N = 4, p = \{2,4\}$, a fixed vector $(y_1,y_2)$, and a function $f:\R^4 \to \R^4$ written as $f = (f_1,f_2,f_3,f_4)$ for maps $f_i:\R^4 \to R$. Then
\begin{align*}
f^p(x_1,x_2,y_1,y_2) = (f_2(y_1,x_1,y_2,x_2),f_4(y_1,x_1,y_2,x_2)). 
\end{align*}
This notation allows us to precisely state our main result. Consider the system%
\begin{align} \label{mainsys}
  x_{t+1} = f(x_t,w_t) + Bu_t%
\end{align}
where $x_t$ is $\R^N$-valued for some $N\in\N$, $B \in \R^{N \times N'}$, $u_t$ is $\R^{N'}$-valued, and $w_t$ takes values in a standard probability space $\sW$. For a fixed $w \in \sW$, let us denote the map $x \mapsto f(x,w)$ by $f_w$. Suppose that the following holds:%
\begin{itemize}
\item[(i)] The state evolution map $f$ is Borel measurable.%
\item[(ii)] The noise process $(w_t)_{t\in\N}$ is i.i.d. By abuse of notation, $\nu$ denotes both the i.i.d. measure on $\sB(\sW)$ and the noise process measure on $\sB(\sW^{\N})$.%
\item[(iii)] The map $f_w(\cdot):\R^N \rightarrow \R^N$ is $C^1$ and injective for any $w \in \sW$.%
\item[(iv)] The initial state $x_0 \in \R^N$ is random and independent of the noise process, and its law $\pi_0$ admits a bounded density.
\item[(v)] The set $\Gamma = \bigg\{p \subseteq \{1,..,N\}: \exists c_p >0 \text{ such that } |\det Df^p_w(x_{p_1},..,x_{p_{|p|}},x_{z_1},..,x_{z_{N-|p|}})| > c_p, \forall x \in \R^N, w \in \sW \bigg\}$ is non-empty.
\item[(vi)] Let $\{t_1,..,t_s\}  \in \Gamma$ be arbitrary and write the random initial state as $x_0 = (x_0^1,..,x_0^N)$. Then there exists a set $S$ consisting of possible realizations of the initial state components not indexed by $\Gamma$ such that the event that these non-indexed initial states take a realization in $S$ has non-zero probability, and the law of $(x_0^{t_1},..,x_0^{t_s})$ admits a bounded density when conditioned on the event that the non-indexed initial states take on a realization $\hat{x}$, for any $\hat{x} \in S$.
\end{itemize}

\begin{theorem} \label{NewTheorem1}
Consider system \eqref{mainsys} satisfying assumptions (i)--(vi), controlled over a noiseless channel with finite alphabet $\sM$ and capacity $C \coloneqq \log_2|\sM|$. if there exists a coding and control policy which renders the state process $(x_t)_{t \in \N}$ AMS ergodic (asymptotically ergodic) with asymptotic mean $Q$, then we must have that%
\begin{align} \label{capacitybound}
  \max_{p \in \Gamma } \int \int \log|\det Df^p_w(x_{p_1},..,x_{p_{|p|}},x_{z_1},..,x_{z_{N-|p|}})| \\ 
  dQ(x_1,..,x_N) d \nu(w) \leq C. \nonumber
\end{align}
\end{theorem}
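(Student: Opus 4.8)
The plan is to fix an arbitrary index set $p \in \Gamma$ and prove the bound $\int\int \log|\det Df^p_w|\,dQ\,d\nu \le C$ for that single $p$; since the left-hand side of \eqref{capacitybound} is the maximum over $p \in \Gamma$, establishing the inequality for every $p$ suffices. The strategy mirrors \cite{garcia2021ergodicity}: I would introduce a \emph{stabilization entropy} $h_p$ tailored to the coordinates indexed by $p$ relative to the asymptotic mean $Q$, and then sandwich it as $\int\int \log|\det Df^p_w|\,dQ\,d\nu \le h_p \le C$. Writing $q \coloneqq \{z_1,\dots,z_{N-|p|}\}$ for the complementary indices, the guiding idea is that the additive control $Bu_t$ can counteract volume expansion in the $p$-directions only at a rate limited by the channel, while assumption (v) guarantees that the $p$-subsystem does not contract volume below the uniform floor $c_p$, so the relevant expansion rate is exactly $\log|\det Df^p_w|$.

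For the upper bound $h_p \le C$ I would use a counting argument. Over a horizon $[0;T-1]$ the coder emits $T$ symbols from $\sM$, so there are at most $|\sM|^T = 2^{CT}$ distinct channel-output sequences; since each control $u_t = \gamma_t^c(q_0,\dots,q_t)$ is a function of these outputs, at most $2^{CT}$ distinct control sequences $(u_0,\dots,u_{T-1})$ can arise. This caps the number of open-loop control policies the controller can deploy, which is precisely the quantity that stabilization entropy counts; taking $\log$ of this count, dividing by $T$, and passing to the limit yields $h_p \le C$.

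For the lower bound I would argue by volume. Conditioning on a realization $\hat{x} \in S$ of the non-$p$ initial components and on the noise and $z$-trajectory — which assumption (vi) permits, since the conditional law of $(x_0^{p_1},\dots,x_0^{p_{|p|}})$ then has a bounded density $\rho$ — the $T$-step map in the $p$-coordinates, $F_u = f^p_{w_{T-1}} \circ \cdots \circ f^p_{w_0}$ shifted by the fixed control $u$, is $C^1$ and injective by (iii) with Jacobian determinant $\prod_{t=0}^{T-1}|\det Df^p_{w_t}|$, the additive control leaving the Jacobian unchanged. For a bounded set $K$ of large $Q$-measure, the change-of-variables formula gives $m\bigl(F_u^{-1}(\pi_p K)\bigr) = \int_{\pi_p K}\prod_t |\det Df^p_{w_t}|^{-1}\,dy$. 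Because the control sequence is itself determined by the trajectory, the event that the $p$-projection stays in $\pi_p K$ over $[0;T-1]$ is covered by the $2^{CT}$ preimages $F_u^{-1}(\pi_p K)$; a union bound together with $\|\rho\|_\infty < \infty$ then forces $2^{CT}$ to dominate $\mathbb{E}\bigl[\prod_{t=0}^{T-1}|\det Df^p_{w_t}(x_t)|\bigr]$ up to a factor involving the measure of $K$. Taking logarithms, dividing by $T$, and invoking AMS ergodicity through the Pointwise Ergodic Theorem and \eqref{spce} converts the resulting time-average $\frac{1}{T}\sum_t \log|\det Df^p_{w_t}(x_t)|$ into the space-average $\int\int \log|\det Df^p_w|\,dQ\,d\nu$, delivering $h_p \ge \int\int \log|\det Df^p_w|\,dQ\,d\nu$.

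The main obstacle I anticipate is the conditioning step that decouples the $p$- and $q$-coordinates. Unlike in a classical data-rate theorem, where the complementary coordinates would be genuine i.i.d.\ noise, here the $z$-trajectory is a controlled, statistically dependent process that enters $f^p_w$ as a parameter, so it cannot be marginalized away naively; assumption (vi) is designed precisely to license treating it as a fixed exogenous signal while the $p$-initial density stays bounded after conditioning. Making this rigorous — ensuring the volume estimate and the union bound survive integration over the conditioned realizations in $S$, that the determinant product's trajectory-dependence is handled inside the covering argument rather than treated as a constant, and that the uniform floor $c_p$ from (v) keeps the integrand bounded below so the ergodic limit is well defined — is where the bulk of the technical care will lie, and is also what distinguishes this refinement from the coarser bound \eqref{oldbound}.
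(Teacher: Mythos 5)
Your top-level architecture matches the paper's: fix one $p\in\Gamma$, reduce by relabeling to $p=\{1,\dots,m\}$, condition via assumption (vi) on a realization $\hat{x}\in S$ of the complementary initial components (with the observation that a policy beating the bound would also stabilize the conditioned system), and sandwich the integral between a stabilization entropy and $C$, with the $h\le C$ half being the counting argument on the at most $|\sM|^T$ realizable control sequences. But your volume-growth lower bound has a fatal flaw at the covering step. You bound the probability of the event that the $p$-projection \emph{stays in} $\pi_p K$ for all $t\in[0;T-1]$ by a union bound over the $2^{CT}$ preimages $F_u^{-1}(\pi_p K)$. Under AMS ergodicity the trajectory only visits $K$ with asymptotic \emph{frequency} $Q(K)<1$; the probability of remaining in a fixed bounded set for all $T$ steps is not bounded below uniformly in $T$ (generically it decays exponentially), so the left side of your union bound tends to zero and no inequality of the form $2^{CT}\gtrsim \mathbb{E}\bigl[\prod_t|\det Df^p_{w_t}(x_t)|\bigr]$ follows. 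The paper's proof is built precisely to avoid this: the $(T,D,E,F,\rho,R)$-spanning sets of Definition \ref{spanningdef} require only visit frequencies $1-r_{j,k,l}$ over a partition $D_j\times E_k\times F_l$, calibrated to $Q(D_j\times E_k)\nu(F_l)$ through the pointwise ergodic theorem — note that ergodicity is consumed in the $h\le C$ half (Lemma \ref{myNewLemma}), not in the volume half, the reverse of your placement — and the excursion times outside $D\times E$ are compensated by the uniform floor $c$ from assumption (v), with the visit-pattern sets $\Lambda\in\A$ and the combinatorial factor $\prod_{j,k,l}(r_{j,k,l}T+1)$ doing the bookkeeping.

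A second genuine gap is your conditioning on the noise \emph{and the $z$-trajectory}. The $z$-path is a controlled process coupled to the $p$-coordinates: for fixed $u$, $w$, $\hat{x}$ it is a function of the unknown initial $p$-components, so it cannot be frozen as an exogenous parameter, and assumption (vi) supplies a bounded conditional density only given the initial value $\hat{x}\in S$, not given an entire path (a measure-zero conditioning that (vi) does not license). Consequently $F_u = f^p_{w_{T-1}}\circ\cdots\circ f^p_{w_0}$ is not a well-defined self-map of $\R^{|p|}$ with Jacobian $\prod_t|\det Df^p_{w_t}|$, and your exact change-of-variables identity $m\bigl(F_u^{-1}(\pi_p K)\bigr)=\int_{\pi_p K}\prod_t|\det Df^p_{w_t}|^{-1}\,dy$ has no meaning; you flag this obstacle yourself but offer no mechanism to resolve it. The paper's mechanism is to never fix the $z$-path: its stabilization entropy carries an extra collection $(E_k)$ of cells in the $z$-space, each per-step volume expansion is bounded below by the cell-infimum $c_{j,k,l}=\inf_{(x,y,w)\in D_j\times E_k\times F_l}|\det Df^p_w(x,y)|$, so only membership of the state-noise triple in a cell is used, and the integral in \eqref{capacitybound} is recovered at the end as a supremum of simple-function lower bounds over partitions followed by monotone convergence — rather than by an exact Jacobian product plus a Jensen/ergodic-averaging step as you propose. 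Without replacing your all-times containment event by frequency constraints and your path-conditioning by cell-wise uniform bounds, the lower-bound half of your sandwich does not go through.
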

where the Jacobian above is the $|p| \times |p|$ matrix of partial derivatives of $f^p_w(\cdot,x_{z_1},..,x_{z_{N-|p|}})$ evaluated at $(x_{p_1},..,x_{p_{|p|}}) \in \R^{|p|}$. 
\begin{remark}
Observe that by taking $p = \{1,..,M\}$ (if $\{1,..,M\} \in \Gamma$), we recover the bound (\ref{oldbound}) established previously in \cite{garcia2021ergodicity} and \cite{yuksel2016stationary}. For a large class of systems however, it is clear that Theorem \ref{NewTheorem1} is a strict refinement, as can be seen by noting that in Example \ref{ex2bd}, taking $p = \{1\}$ recovers the sharper bound
\begin{align}
\int \log_2|(1+y^2)(3x^2 + 1)|dQ(x,y) \leq C.
\end{align}
It is clear that for linear system, the new bound recovers the tight linear bound such as in Example \ref{ex1bd}.
\end{remark}
\begin{remark}
Note also that the technical assumption (vi) is satisfied if the initial state has independent components each admitting a bounded density. Suppose now that for a given system, the assumptions (i)-(v) of Theorem \ref{NewTheorem1} are satisfied, but (vi) only hold for certain subsets of $\Gamma$. Then the theorem will still hold, however the $\max$ in (\ref{capacitybound}) should be taken only over subsets of $\Gamma$ for which the assumption (vi) holds. This last observation will become clear from the proof. 
\end{remark}
\begin{remark}
Noting that the Jacobian determinant is invariant under a linear change of coordinates, we note that the bound in (\ref{NewTheorem1}) is invariant under a linear change of coordinates. Note however that assumption (v) is not coordinate independent; it is not hard to see that for certain systems, the choice of coordinates may result in a different (or empty) set $\Gamma$. Under a non-linear coordinate change, it is not clear if the above bound is invariant (or if control even remains additive), thus a possible future research direction is to consider the problem of optimizing the coordinate system chosen in order to maximize the bound. 
\end{remark}


\section{Proofs}
We first fix an integer $m \leq M$ and view the map $f_w$ as a function of two vectors, i.e. we decompose the state into a pair $(x,y)$ where $x \in \R^m$ and $y \in \R^{N-m}$. Consider the control system in Theorem \ref{NewTheorem1} and note that for a fixed sequence of controls $u \coloneqq (u_t)_{t \in \N}$, a fixed sequence of noise symbols $w \coloneqq (w_t)_{t \in \N}$, and a fixed initial state $x_0 \in \R^N$, the state process $(x_t)_{t \in \N}$ is deterministic. Let us introduce the notation $\phi(t,x_0,u,w) \coloneqq x_t$ for every $t \in \N$. Letting $\pi_m$ and $\pi_{N-m}$ denote the natural projection of $\R^N$ on to the first $m$ and last $N-m$ coordinates respectively, we further define $\phi^m(t,x_0,u,w) \coloneqq \pi_m(x_t)$ and $\phi^{N-m}(t,x_0,u,w) \coloneqq \pi_{N-m}(x_t)$ so that
\begin{align*}
 \phi(t,x_0,u,w) = (\phi^m(t,x_0,u,w),\phi^{N-m}(t,x_0,u,w)) \text{   for every   }t \in \N.
\end{align*}
We now provide a sketch of the proof, which relies on the notion of stabilization entropy and an associated lemma relating it to channel capacity. Compared to \cite{garcia2021ergodicity}, we consider a version of stabilization entropy with an additional collection of sets since we are decomposing the state space into two components. The definition follows:
\subsection{Stabilization Entropy}
\begin{definition}  \label{spanningdef}
Let $(D_j)_{j = 1}^{j = d} \subseteq \sB(\R^m)$, $(E_k)_{k=1}^{k=e} \in \sB(\R^{N-m})$ and $(F_l)_{l=1}^{f} \in \sB(\sW)$ be finite disjoint unions of Borel sets and define
\begin{align*}
D \coloneqq \bigsqcup_{j=1}^{d}D_j \quad \quad E \coloneqq \bigsqcup_{k=1}^{e}E_k \quad \quad F \coloneqq \bigsqcup_{l=1}^{f}F_l.
\end{align*}
Let also $R$ denote a collection of numbers $r_{j,k,l} \in [0,1]$ for $j \in \{1,\ldots,d\}$, $k \in \{1,\ldots,e\}$ and $l \in \{1,\ldots,f\}$ satisfying%
\begin{equation*}
  1 - r := \sum_{j=1}^d \sum_{k=1}^e \sum_{l=1}^f (1 - r_{j,k,l}) \in [0,1]%
\end{equation*}
and fix $T \in \N$ and $\rho \in (0,1)$. A set $S \subseteq (\R^N)^T$ of control sequences of length $T$ is called $(T,D,E,F,\rho,R)$-spanning iff there exists $\tilde{\Omega} \in \sF$ such that the following conditions:
\begin{itemize}
\item $P(\tilde{\Omega}) \geq 1-\rho$.%
\item For each $\omega \in \tilde{\Omega}$, there exists a control sequence $u \in S$ such that
\begin{align*}
  \frac{1}{T}|\{t \in [0;T-1] : (\phi^m(t,x_0(\omega),u,w(\omega)),\phi^{N-m}(t,x_0(\omega),u,  \\
  w(\omega)),w_t(\omega)) \in D_j \times E_k \times F_l \}| \geq 1 - r_{j,k,l}%
\end{align*}
for all $j$, $k$ and $l$.
\end{itemize}
both hold.
\end{definition}

We slightly abuse notation writing $(T,D,E,F,\rho,R)$-spanning instead of \\ $(T,(D_j)_{j=1}^{d},(E_k)_{k=1}^{e},(F_l)_{l=1}^f,\rho,R)$-spanning. Whenever we do this however, the specific sequences of sets making up the disjoint unions will be clear from context. We will use the size of spanning sets to quantify the difficulty of a control task. This leads to:

\begin{definition} \label{thm;defofsets}
(Stabilization Entropy) For the system (\ref{mainsys}), and sequences of sets as in Definition \ref{spanningdef}, we define the $(D,E,F,\rho,R)$-stabilization entropy by%
\begin{equation*}
  h(D,E,F,\rho,R) := \limsup_{T \to \infty}\frac{1}{T} \log s(T,D,E,F,\rho,R),%
\end{equation*}
where $s(T,D,E,F,\rho,R)$ denotes the smallest cardinality of a $(T,D,E,F,\rho,R)$-spanning set. We define this quantity to be $\infty$ if no finite spanning set exists.%
\end{definition}

Finite $(T,D,E,F,\rho,R)$-spanning sets need not exist in general but as we will shortly see, they exist in desired scenarios. The following lemma relates stabilization entropy with channel capacity.

\begin{lemma} \label{myNewLemma}
Consider system (\ref{mainsys}) with the assumptions of Theorem \ref{NewTheorem1} (i.e., a coding and control policy exists over a noiseless channel of capacity $C = \log_2|\sM|$ which makes the state process AMS ergodic with asymptotic mean $Q$). Let $D,E$ and $F$ be as in Definition \ref{spanningdef} and let $\rho \in (0,1)$ be arbitrary. Let $\epsilon > 0$ and define the collection of numbers $R_\epsilon \coloneqq (r_{j,k,l})_{1 \leq j \leq d,1 \leq k \leq e, 1 \leq l \leq f}$, where%
\begin{align*}
  r_{j,k,l} := \begin{cases}
      (1+\epsilon)(1 - Q(D_j \times E_k)\nu(F_l)) & \text{ if } \kappa_{j,k,l} \in (0,1) \\
      1 & \text{ if } \kappa_{j,k,l} = 0  \\
      \epsilon & \text{ if } \kappa_{j,k,l} = 1
    \end{cases}
\end{align*}
where we use the shorthand $\kappa_{j,k,l} \coloneqq Q(D_j\times E_k)\nu(F_l)$. Although the $r_{j,k,l}$'s are $\epsilon$-dependent, we suppress this from the notation. The claim of the lemma is that for all sufficiently small $\epsilon>0$, the stabilization entropy is well defined and satisfies
\begin{equation}\label{eq_centbound}
  h(D,E,F,\rho,R_\epsilon) \leq C.%
\end{equation}
\end{lemma}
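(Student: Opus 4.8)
The plan is to show that a coding and control policy achieving AMS ergodicity furnishes a $(T,D,E,F,\rho,R_\epsilon)$-spanning set whose cardinality is controlled by the channel alphabet, and then to verify that this set genuinely satisfies the spanning conditions on an event of probability at least $1-\rho$. First I would fix $\rho \in (0,1)$ and observe that since the channel has finite alphabet $\sM$, over a time horizon $T$ there are at most $|\sM|^{T}$ distinct channel-output sequences $(q_0,\ldots,q_{T-1})$. Because the controller's output $u_t = \gamma_t^c(q_0,\ldots,q_t)$ is a deterministic function of the channel outputs alone, each such output sequence induces a single deterministic control sequence $u \in (\R^{N'})^{T}$. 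Hence the set $S$ of control sequences realizable under the policy over horizon $T$ has cardinality at most $|\sM|^{T}$. Taking logarithms and dividing by $T$ will yield $\frac{1}{T}\log|S| \leq \log|\sM| = C$, so that once we show $S$ is spanning we immediately obtain $h(D,E,F,\rho,R_\epsilon) \leq C$ in the limsup.

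The core of the argument is then to verify that this $S$ is $(T,D,E,F,\rho,R_\epsilon)$-spanning for all large $T$, which is where the ergodicity enters. For each realization $\omega$, the policy generates an actual control sequence $u(\omega) \in S$ and the resulting state trajectory coincides with $\phi(t,x_0(\omega),u(\omega),w(\omega))$. I would therefore reduce the spanning condition to a statement about empirical time-averages of the indicator $\1_{D_j \times E_k \times F_l}$ evaluated along the joint state-noise trajectory $(x_t,w_t)$. The AMS ergodicity hypothesis, combined with equation (\ref{spce}) and the pointwise ergodic theorem applied to the augmented process, should give that for $Q\otimes\nu$-almost every trajectory the empirical frequency of visiting $D_j \times E_k \times F_l$ converges to $Q(D_j \times E_k)\nu(F_l) = \kappa_{j,k,l}$. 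The definition of $R_\epsilon$ is reverse-engineered precisely so that the required lower bound $1 - r_{j,k,l}$ on this frequency holds in the limit: when $\kappa_{j,k,l} \in (0,1)$ the threshold $1-r_{j,k,l} = Q(D_j\times E_k)\nu(F_l) - \epsilon\bigl(1 - Q(D_j\times E_k)\nu(F_l)\bigr)$ sits strictly below the limiting frequency $\kappa_{j,k,l}$, while the degenerate cases $\kappa_{j,k,l} \in \{0,1\}$ are handled by the trivial thresholds. Since there are only finitely many triples $(j,k,l)$, I can intersect the finitely many full-measure convergence events and apply Egorov-type reasoning to extract, for all $T \geq T_0(\rho,\epsilon)$, an event $\tilde\Omega$ with $P(\tilde\Omega) \geq 1-\rho$ on which all the empirical-frequency inequalities hold simultaneously.

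The main obstacle I anticipate is the careful bookkeeping around the joint noise-and-state ergodicity. The process measure $\mu$ lives on state sequences while $\nu$ governs the noise, and equation (\ref{spce}) is stated for state-coordinate indicators only; to handle the product sets $D_j \times E_k \times F_l$ involving the noise symbol $w_t$ I would need to argue that the augmented process $(x_t,w_t)_{t\in\N}$ is itself AMS ergodic with asymptotic mean compatible with $Q \otimes \nu$, exploiting that the noise is i.i.d.\ and independent of the closed-loop history up to time $t$. A second delicate point is the uniformity in $T$: ergodic convergence is asymptotic, so I must choose $T_0$ large enough that the exceptional set shrinks below $\rho$, and ensure the strict slack introduced by the factor $(1+\epsilon)$ absorbs the finite-$T$ fluctuations. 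The requirement that $\epsilon$ be \emph{sufficiently small} arises exactly so that $1 - r := \sum_{j,k,l}(1-r_{j,k,l})$ remains in $[0,1]$, which I would verify by noting $\sum_{j,k,l}(1 - \kappa_{j,k,l}) = de f - \sum_{j,k,l}\kappa_{j,k,l}$ and bounding the $\epsilon$-perturbation; this constrains how large $\epsilon$ may be taken while keeping $R_\epsilon$ admissible in the sense of Definition \ref{spanningdef}.
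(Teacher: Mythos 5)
Your proposal is correct and follows essentially the same route as the paper's proof: the counting bound $|S_T| \leq |\sM|^T$ from causal policies over the finite-alphabet channel, the pointwise ergodic theorem combined with the i.i.d.\ noise law to get almost-sure convergence of the joint state-noise empirical frequencies to $Q(D_j \times E_k)\nu(F_l)$, and your ``Egorov-type'' extraction of a high-probability event is precisely the paper's construction of the nested events $E_I^J$ with $1/I_0 \leq \epsilon(1-\kappa_{j,k,l})$ and continuity of probability. Your handling of the degenerate cases $\kappa_{j,k,l} \in \{0,1\}$ via the trivial threshold ($r_{j,k,l}=1$) and the same ergodic argument ($r_{j,k,l}=\epsilon$) matches the paper's Cases 2 and 3, and your flagged concern about joint ergodicity of $(x_t,w_t)$ is resolved exactly as you suggest, using independence of $x_t$ and $w_t$ at each $t$ together with the strong law for the i.i.d.\ noise.
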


\begin{proof}
We note that for $\epsilon > 0$ sufficiently enough, the conditions%
\begin{enumerate}[(i)]
\item $1 - r \coloneqq \sum_{j,k,l} (1 - r_{j,k,l}) \in [0,1]$,%
\item $1 - (1 + \epsilon)(1 - Q(D_j,E_k)\nu(F_l)) \in (0,1) $ for all $j,k,l$ with $Q(D_j,E_k)\nu(F_l) \in (0,1)$,%
\end{enumerate}
are both satisfied, thus ensuring that for such a small $\epsilon$ the stabilization entropy $h(D,E,F,\rho,R_\epsilon)$ is well defined. Consider system (\ref{mainsys}) evolving according to the fixed coding and control policy which renders the state process $(x_t)_{t \in \N}$ AMS ergodic with AMS mean $Q$. To prove that inequality we consider three cases:

\textbf{Case 1:} We first consider the case where $Q(D_j \times E_k)\nu(F_l) \in (0,1)$ for all $j,k,l$. Let $\epsilon > 0$ be small enough such that $\epsilon < \rho$ as well as conditions (i) and (ii) are satisfied. We will show that for any such $\epsilon$ the claim holds. Let us denote the process measure by $\mu$, which is AMS by assumption. Now, for any $V \in \sB(\sW)$, it is clear by the i.i.d.~property that%
\begin{equation*}
  P\Bigl(\Bigl\{\omega \in \Omega : \lim_{T \to \infty} \frac{1}{T} \sum_{t=0}^{T-1}\1_V(w_t(\omega)) = \nu(V)\Bigr\}\Bigr) = 1.%
\end{equation*}
Noting that $x_t$ and $w_t$ are independent at each time step $t$, $(w_t)_{t \in N}$ is i.i.d, and recalling equation (\ref{spce}), it follows that $P(\hat{\Omega}) = 1$ where 
\begin{align}\label{ergodicbehaviour}
  \hat{\Omega} &\coloneqq \{\omega \in \Omega : \lim_{T \to \infty} \frac{1}{T} \sum_{t=0}^{T-1} \1_{D_j}(\pi_m(x_t(\omega))) \cdot \\ &\1_{E_k}(\pi_{N-m}(x_t(\omega))) \cdot \1_{F_l}(w_t(\omega)) = Q(D_j \times E_k)\nu(F_l),\ \forall j,k,l\Bigr\} \nonumber
\end{align}
where we note that the above set can be written as the intersection of a finite number of full measure sets. We continue by defining the events%
\begin{align*}
  &E_I^J \coloneqq \Bigl\{\omega \in \Omega :\ \Bigl| \frac{1}{T} \sum_{t=0}^{T-1} \1_{D_j}(\pi_m(x_t(\omega)))\1_{E_k}(\pi_{N-m}(x_t(\omega))) \\ &\1_{F_l}(w_t(\omega)) -Q(D_j \times E_k)\nu(F_l) \Bigr| < \frac{1}{I} \forall j,k,l \text{ whenever } T \geq J \Bigr\}
\end{align*}
and note that for any $I \in \N$, it is clear that $ \hat{\Omega} \subseteq \bigcup_{J = 1}^{\infty}E_I^J$ therefore $P \Big( \bigcup_{J = 1}^{\infty}E_I^J \Big) = 1$. 
Let now $I_0$ be large enough such that%
\begin{equation*}
  \frac{1}{I_0} \leq \epsilon (1 - Q(D_j \times E_k)\nu(F_l))  \mbox{\quad for all\ } j,k,l%
\end{equation*}
and observe that $E_{I_0}^1 \subseteq E_{I_0}^2 \subseteq E_{I_0}^3 \subseteq \cdots$. By continuity of probability, we have%
\begin{equation*}
  \lim_{J \rightarrow \infty} P(E_{{I_0}}^{J}) = P\bigg(\bigcup_{J = 1}^{\infty}E_{I_0}^J\bigg) = 1,%
\end{equation*}
and thus there exists $J_0 \in \N$ such that $P(E_{I_0}^{J}) \geq 1 - \epsilon$ for all $J \geq J_0$. For an arbitrary $T \geq J_0$, we define the set of control sequences%
\begin{equation*}
  S_T := \{u_{[0;T-1]}(\omega) : \omega \in E_{I_0}^T\}.%
\end{equation*}
We claim that this set is $(T,D,E,F,\rho,R_\epsilon)$-spanning. We use the set $\tilde{\Omega}_T := E_{I_0}^T \in \sF$ to show this, where we note that $P(\tilde{\Omega}_T) \geq 1 - \epsilon > 1 - \rho$, satisfying the first requirement of the spanning set definition (Definition \ref{spanningdef}). To check the second condition, observe that for every $\omega \in \tilde{\Omega}_T$ and every triple $j,k,l$, the control sequence $u_{[0;T-1]}(\omega) \in S_T$ results in the joint state-noise process satisfying%
\begin{align*}
  \Big|\frac{1}{T} \sum_{t=0}^{T-1} \1_{D_j}(\pi_m(x_t(\omega)))\1_{E_k}((\pi_{N-m}(x_t(\omega))) \1_{F_l}(w_t(\omega)) - \\ Q(D_j \times E_k)\nu(F_l)\Big|  
  < \frac{1}{I_0} \leq \epsilon (1 - Q(D_j \times E_k)\nu(F_l)).%
\end{align*}
which implies that
\begin{align*}  
& \frac{1}{T}|\{t \in [0;T-1] : (\phi(t,x_0(\omega),u_{[0;T-1]}(\omega),w(\omega)),w_t(\omega)) \in \\ &D_j \times E_k \times F_l \}|
   \geq 1 - (1+\epsilon)(1 - Q(D_j,E_k)\nu(F_l)) = 1 - r_{j,k,l}%
\end{align*}
which establishes the second condition, since the triple $j,k,l$ was arbitrary. We have thus established that $S_T$ is $(T,D,E,F,\rho,R_\epsilon)$-spanning.
Since the fixed causal coding and control policy can generate at most $|\sM|^T$ distinct control sequences by time $T$, it follows that $|S_T| \leq |\sM|^T$, therefore $s(T,D,E,F,\rho,R_\epsilon) \leq |\sM|^T$. Recalling that $T \geq J_0$ was arbitrary, we find that%
\begin{equation*}
  \log s(T,B,D,\rho,R_\epsilon) \leq T \log_2|\sM| = TC \mbox{\quad for all\ } T \geq J_0,%
\end{equation*}
and therefore dividing by $T$ and letting $T\rightarrow\infty$ yields the desired capacity bound \eqref{eq_centbound}, completing the proof for Case 1. 

\textbf{Case 2:} We now consider the case where every triple of sets $(D_j,E_k,F_l)$ satisfies $Q(D_j \times E_k)\nu(F_l) \in [0,1)$. Suppose that $j,k,l$, is such that $Q(D_j \times E_k)\nu(F_l) = 0$. Then $1 - r_{j,k,l} = 0$ and the second condition in Definition \ref{spanningdef} is vacuously satisfied. Combining this with Case 1, the result follows.    

\textbf{Case 3:} Finally, we consider the case where for some indices $j,k,l$, $Q(D_j \times E_k)\nu(F_l) = 1$. Because each collection of sets is disjoint, $Q(D_{j'} \times E_{k'})\nu(F_{l'}) = 0$ whenever $(j,k,l) \neq (j',k',l')$. The analysis reduces to establishing the second condition in Definition \ref{spanningdef} for the single set $D_j \times E_k \times F_l$ with $Q(D_j \times E_k)\nu(F_l) = 1$. Using an almost identical argument as in Case 1, the result follows. Alternatively, the analysis of a single set can be found in \cite{kawan2019invariance}, where AMS was considered instead of AMS ergodicity as the control objective. Since AMS ergodicity implies AMS, and $h(D,E,F,\rho,R)$ reduces to the stabilization entropy notion used in \cite{kawan2019invariance} in case of a single set, the desired inequality follows.%
\end{proof}

\subsection{Proof of Theorem \ref{NewTheorem1}}
To prove Theorem \ref{NewTheorem1}, we will approximate the integral in equation (\ref{capacitybound}) form below using simple functions. We will prove that each of these approximations is upper bounded by the stabilization entropy, which in turn is no larger than the channel capacity. Taking a limit will yield the result. First, a few simplifications are in order.

\begin{proof}

Recalling that we fixed an integer $m \leq N$, we define $p \coloneqq \{1,..,m\}$. WLOG, it suffices to establish
\begin{align}\label{modifiedineq}
  \int \int \log_2|\det Df^p_w(x_{p_1},...,x_{p_{m}},x_{z_1},...,x_{z_{N-m}})| \nonumber \\ d Q(x_1,...,x_N) d \nu(w) \leq C%
\end{align}
since by a relabeling of coordinates, any other set $p' \in \Gamma$ can be written in the form $\{1,2,3,..,|p'|\}$. 

Recall now that by assumption (vi) in Theorem \ref{NewTheorem1}, there exists a set $S \subseteq \R^{N-m}$ with positive probability in the sense that
\begin{align*}
P(\{\omega \in \Omega : (x_0^{m+1}(\omega),x_0^{m+2}(\omega),...,x_0^{N}(\omega)) \in S \} ) > 0.
\end{align*}
Moreover, the set $S$ has the property that for any $\hat{x} \in S$, conditioning on the event $\{\omega \in \Omega : (x_0^{m+1},..,x_0^{N-m})(\omega) = \hat{x} \}$ results in the law of the random vector $(x_0^1,..,x_0^m)$ admitting a bounded density. Let $\pi_0'$ denote this conditional law. We now establish inequality (\ref{modifiedineq}) under slightly different assumptions than those of Theorem \ref{NewTheorem1}. More specifically, we impose that
\begin{itemize}
\item The last $N - m$ components of the initial state are deterministic, taking the value $\hat{x}$ for some arbitrary $\hat{x} \in S$.
\item The initial $m$ components of $x_0$ are distributed according to the law $\pi_0'$.
\end{itemize}
We claim that if we can establish (\ref{modifiedineq}) under these modified assumptions, the inequality will also hold under the assumptions of Theorem \ref{NewTheorem1}. To see this, suppose otherwise. Then a coding and control policy exists which stabilizes the system in Theorem \ref{NewTheorem1} over a channel of capacity strictly less than the LHS of (\ref{modifiedineq}). Since the stabilizing scheme works almost surely, this coding and control policy would also stabilize the system under the modified assumptions above for at least one $\hat{x} \in S$ (since $S$ has non-zero measure), resulting in a contradiction since we are assuming that (\ref{modifiedineq}) holds for the modified system and for every $\hat{x}\in S$. We now proceed under the modified assumptions, and redefine $\pi_0 \coloneqq \pi_0'$ to refer to the conditional law of $\pi_m(x_0)$.

Let $c \in (0,1)$ be such that $c < |\det Df^p_w(x,y)|$ for all $(x,y) \in \R^m \times \R^{N-m}$ and $w \in \sW$. Let also $\delta > 0$ (think of this as small) and $\rho \in (0,1)$ (think of this as close to $1$) be arbitrary. Next, fix Borel sets $D \subset \R^m$ and $E \subset \R^{N-m}$ satisfying that $D \times E$ have finite $N$-dimensional Lebesgue measure and that
\begin{equation*}
  Q(D \times E) > 1 - \frac{\delta}{2|\log c|}%
\end{equation*}
holds (such sets can easily be found due to continuity of probability). Put also $F = \sW$ and let $(D_j)_{j=1}^{d}$, $(E_k)_{k=1}^{e}$ and $(F_l)_{l=1}^{f}$ be (disjoint) partitions of $D$, $E$ and $F$ respectively. Let now $\epsilon > 0$ be small enough so that Lemma \ref{myNewLemma} holds, resulting in%
\begin{equation*} 
  h(D,E,F,\rho,R_\epsilon) \leq C,%
\end{equation*}
where $R_\epsilon$ is the associated collection of $r_{j,k,l}$'s as defined in Lemma \ref{myNewLemma}. Let also $1 - r \coloneqq \sum (1-r_{j,k,l})$. Expanding out, it is easy to see (recalling that $\nu(F) = 1$) that $r = 1 - (1 + \epsilon)Q(D \times E) + def \epsilon$ (or $r = \epsilon$ if one of the $D_j \times E_k \times F_l$'s has full $Q \times \nu$-measure) thus we see that for every sufficiently small $\epsilon$,%
\begin{equation}\label{eq_r_choice}
  2r < \frac{\delta}{|\log c|}.%
\end{equation}
Now fix a sufficiently large $T\in\N$ and let $S_T$ be a finite $(T,D,E,F,\rho,R_\epsilon)$-spanning set (whose existence is guaranteed by the proof of Lemma \ref{myNewLemma}) with $\tilde{\Omega}_T \in \sF$, $P(\tilde{\Omega}) \geq 1 - \rho$, the associated subset of $\Omega$. Letting $x_0^m$ denote the vector consisting of the first $m$ components of $x_0$, we proceed by defining%
\begin{align*}
A &\coloneqq \{(w(\omega),x_0^m(\omega)) : \omega \in \tilde{\Omega}\}, \\
A(u) &\coloneqq \{ (w,x) \in \sW^{\N} \times \R^m : \forall j,k,l, \\ &\frac{1}{T}\sum_{t=0}^{T-1}\1_{D_j \times E_k \times F_l}(\phi(t,(x,\hat{x}),u,w),w_t) \geq 1 - r_{j,k,l}  \}. \\
A(u,w) &\coloneqq \{ x \in \R^m : (w,x) \in A(u) \}.
\end{align*}
Letting $m$ denote the $m$-dimensional Lebesgue measure, we see that
\begin{align}\label{eq_nc1}
  A \subseteq \bigcup_{u \in S_T}A(u), \quad (\nu \times m)(A(u)) = \int m(A(u,w)) d \nu(w),%
\end{align}
where the equality follows from the Fubini-Tonelli theorem (and the containment by definition of the sets). Letting $M>0$ be an upper bound for the density of $\pi_0$, we have that
\begin{equation}\label{eq_nc2} 
  1 - \rho \leq (\nu \times \pi_0)(A) \leq M\cdot(\nu \times m)(A).%
\end{equation} 
Combining (\ref{eq_nc1}) and (\ref{eq_nc2}), we obtain the key inequality:
\begin{align} \label{eq_nc3} 
 \frac{1}{M}(1 - \rho) &\leq (\nu \times m)(A) \leq |S_T| \max_{u\in S_T} (\nu \times m)(A(u)) \nonumber \\ 
 &= |S_T| \max_{u\in S_T} \int m(A(u,w)) d \nu(w).
\end{align}
The next step in the proof is to obtain upper bounds for the volume $m(A(u,w))$. We proceed by defining a set consisting of disjoint collections of subsets of $\{0,\ldots,T-1\}$:%
\begin{align*}
  \A &\coloneqq \{ \Lambda = \{\Lambda_{j,k,l}\}_{j,k,l} : \bigsqcup_{j=1}^{d}\bigsqcup_{k=1}^{e}\bigsqcup_{l=1}^{f}\Lambda_{j,k,l} \subseteq \{0,\ldots,T-1\},\\
  & |\Lambda_{j,k,l}| \geq (1 - r_{j,k,l})T, \forall j=1,..,d,\ k=1,..,e,\ l=1,..,f \}%
\end{align*}
and note that as a consequence of the definition, $|\bigsqcup_{j=1}^{d}\bigsqcup_{k=1}^{e}\bigsqcup_{l=1}^{f}\Lambda_{j,k,l}| \geq (1-r)T$ for all $\Lambda \in \A$. We note that such sets can only be found for $T$ sufficiently large, however as we will be taking a limit as $T \to \infty$, this is not a problem. For $\Lambda \in \A$, define the set%
\begin{align*}
  A(u,w,\Lambda) := \{x \in \R^m : (\phi(t,(x,\hat{x}),u,w),w_t) \in D_j \times E_k \times F_l \Leftrightarrow \\ t \in \Lambda_{j,k,l} \text{ for all } j,k,l \}.%
\end{align*}
It is not hard to see that $A(u,w) = \bigsqcup_{\Lambda \in \A} A(u,w,\Lambda)$ is a disjoint union, thus (\ref{eq_nc3}) becomes%
\begin{align} \label{eq_nc4}
 \frac{1}{M}(1 - \rho) \leq |S_T| \max_{u\in S_T} \int \sum_{\Lambda \in \A} m(A(u,w,\Lambda)) d \nu(w).
\end{align}
Our next step is to bound the volumes of the form $m(A(u,w,\Lambda))$. Writing $\varphi_{t,u,w}(\cdot) := \varphi(t,(\cdot,\hat{x}),u,w)$ we define%
\begin{equation*}
  A_t(u,w,\Lambda) := \phi_{t,u,w}(A(u,w,\Lambda)),\quad t = 0,1,\ldots,T-1,%
\end{equation*}	
and observe that%
\begin{equation*}
  A_t(u,w,\Lambda) \subseteq D_j \text{\quad whenever}\ t \in \Lambda_{j,k,l} \ \forall \ j,k,l.%
\end{equation*}
Next, we define the following numbers:%
\begin{align*}
  c_{j,k,l} \coloneqq \inf_{(x,y,w) \in D_j \times E_k \times F_l}|\det Df^p_w(x,y)|.%
\end{align*}
Recalling that by assumption $f_w^p(\cdot,y)$ is injective and $C^1$, it follows that for all $(j,k,l)$ we have that%
\begin{align*}
  m(A_{t+1}(u,w,\Lambda)) &\geq c_{j,k,l} \cdot m(A_t(u,w,\Lambda)) \text{ whenever } t\in \Lambda_{j,k,l}, \\
  m(A_{t+1}(u,w,\Lambda)) &\geq c \cdot m(A_t(u,w,\Lambda)) \text{ whenever } t \notin \bigsqcup \Lambda_{j,k,l}.
\end{align*}
Letting $t^*(\Lambda_{j,k,l}) \coloneqq \max\Lambda_{j,k,l}$, $t^*(\Lambda) \coloneqq \max_{j,k,l}t^*(\Lambda_{j,k,l})$, applying the above inequalities repeatedly, and recalling that $c \leq c_{j,k,l}$, it is not hard to see that%
\begin{align*}
  m(A(u,w,\Lambda)) \Bigl( \prod_{j=1}^{d}\prod_{k=1}^{e}\prod_{l=1}^{f} c_{j,k,l}^{|\Lambda_{j,k,l}|-1} \Bigr) c^{rT+def} \\ \leq m(A_{t^*(\Lambda)}(u,w,\Lambda)).%
\end{align*}
where in principle, all the exponents of the $c_{j,k,l}$'s should be $|\Lambda_{j,k,l}|$, except for possibly one which should be $|\Lambda_{j,k,l}| - 1$. We do not know which one though, so we write the weaker inequality as above. Combining this with (\ref{eq_nc4}), we obtain%
\begin{align*}
  \frac{1}{M}(1 - \rho) &\leq |S| \max_{u\in S_T} \sum_{\Lambda \in \A} \int m(A_{t^*(\Lambda)}(u,w,\Lambda)) c^{-(rT+def)} \\  &\prod_{j = 1}^{d} \prod_{k = 1}^{e} \prod_{l = 1}^{f} c_{j,k,l}^{-(|\Lambda_{j,k,l}|-1)}  d \nu(w),
\end{align*}
and note that the right hand side of the above can be written as
\begin{align*}
  &= |S| \cdot c^{-(rT+def)} \max_{u\in S_T} \sum_{t_{1,1,1} = (1-r_{1,1,1})T}^{T} \cdots \sum_{t_{d,e,f} = (1-r_{d,e,f})T}^{T} \\
  &\quad \int \sum_{\Lambda \in \A :\ t^*(\Lambda_{j,k,l}) = t_{j,k,l} \forall j,k,l} m(A_{t^*(\Lambda)}(u,w,\Lambda)) \\
  &\prod_{j = 1}^{d} \prod_{k = 1}^{e} \prod_{l=1}^{f} c_{j,k,l}^{-(|\Lambda_{j,k,l}|-1)} d \nu(w)  \\ 
  &\leq |S| \cdot c^{-(2rT+def)} \max_{u\in S_T} \sum_{t_{1,1,1} = (1-r_{1,1,1})T}^{T} \cdots \sum_{t_{d,e,f} = (1-r_{d,e,f})T}^{T}\\
  &\quad \int \sum_{\Lambda \in \A :\ t^*(\Lambda_{j,k,l}) = t_{j,k,l} \forall j,k,l} m(A_{t^*(\Lambda)}(u,w,\Lambda)) \\ 
  &\prod_{j = 1}^{d} \prod_{k = 1}^{e} \prod_{l=1}^{f} c_{j,k,l}^{-((1-r_{j,k,l})T-1)} d \nu(w).%
\end{align*}
where the last inequality follows by noting that
\begin{align*}
  &c^{rT + def} \prod_{j,k,l} c_{j,k,l}^{|\Lambda_{j,k,l}|-1} = c^{rT + \sum_{j,k,l}|\Lambda_{j,k,l}|} \prod_{j,k,l} \Bigl(\frac{c_{j,k,l}}{c}\Bigr)^{|\Lambda_{j,k,l}|-1}\\
  &\geq c^{rT + \sum_{j,k,l}|\Lambda_{j,k,l}|} \prod_{j,k,l} \Bigl(\frac{c_{j,k,l}}{c}\Bigr)^{(1-r_{j,k,l})T-1} \\
  &= c^{rT + \sum_{j,k,l}|\Lambda_{j,k,l}| - (1-r)T + def} \prod_{j,k,l}c_{j,k,l}^{(1-r_{j,k,l})T-1}\\
  &\geq c^{2rT + def} \prod_{j,k,l}c_{j,k,l}^{(1-r_{j,k,l})T-1}.%
\end{align*}
Observe that the sets $A_{t^*(\Lambda)}(u,w,\Lambda)$ with $\Lambda \in \A$, $t^*(\Lambda)$ fixed, are pairwise disjoint, since they are the images of the corresponding sets $A(u,w,\Lambda)$ under the injective map $\varphi_{t^*(\Lambda),u,w}$. Moreover, all of these sets are contained in $D$, hence%
\begin{equation*}
  \sum_{\Lambda \in \A : t^*(\Lambda_{j,k,l}) = t_{j,k,l} \forall j,k,l} m(A_{t^*(\Lambda)}(u,w,\Lambda)) \leq m(D).%
\end{equation*}
Together with the above chain of inequalities, this implies%
\begin{align*}
 \frac{1}{M}(1 - \rho) \leq  |S_T| \cdot m(D) \cdot c^{-(2rT+def)} \cdot \\ \prod_{j = 1}^{d}  \prod_{k = 1}^{e} \prod_{l=1}^{f} c_{j,k,l}^{-((1-r_{j,k,l})T-1)}  \prod_{j=1}^d \prod_{k=1}^e \prod_{l=1}^f (r_{j,k,l}T + 1).%
\end{align*}
Since this inequality holds for every $T$ sufficiently large, we can take logarithms on both sides, divide by $T$ and let $T \rightarrow \infty$. This results in%
\begin{equation*}
  0 \leq h(D,E,F,\rho,R_{\epsilon}) - 2r \log c - \sum_{j=1}^d \sum_{k=1}^e \sum_{l=1}^f (1-r_{j,k,l}) \log c_{j,k,l}.%
\end{equation*}
Recalling the definition of $r_{j,k,l}$, the fact that $\epsilon$ can be chosen arbitrarily small and \eqref{eq_r_choice}, this leads to the estimate%
\begin{align*}
  C + \delta \geq \sum_{j=1}^d \sum_{k=1}^e \sum_{l=1}^f Q(D_j \times E_k)\nu(F_l) \\ \inf_{(x,y,w) \in D_j \times E_k \times F_l}\log |\det Df^p_w(x,y)|.%
\end{align*}
Considering the supremum of the right-hand side over all finite measurable partitions of $D,E$ and $F = \sW$ leads to%
\begin{align*}
  C + \delta \geq \int \int \1_{D \times E}(x_1,..,x_N) \log |\det Df^p_w(x_1,..,x_N)| \\ dQ(x_1,..,x_N) d \nu(w),%
\end{align*}
where we use that the integrand is uniformly bounded below by $\log c$ (and hence, we can assume that it is non-negative). Considering now an increasing sequence of sets $D_k \times E_k \subset \R^N$ whose union is $\R^N$, we can invoke the theorem of monotone convergence to obtain the desired estimate, observing that $\delta$ can be made arbitrarily small as $D_k \times E_k$ becomes arbitrarily large. This completes the proof.
\end{proof}

\addtolength{\textheight}{-3cm}

\section{CONCLUSIONS}
In conclusion we have - for a certain class of non-linear systems - established a sharper bound on channel capacity required for ergodic stabilization. The techniques involved in the proof are stabilization entropy, a volume growth argument, and the property that almost surely, system sample paths visit regions of the state space at a frequency given by an ergodic measure. There are three possible avenues of further investigation. First, it would be interesting to enlarge the class of noise processes for which the bounds in this paper hold. We have considered only i.i.d. noise, however it is possible that an ergodic-like property (i.e. that equation (\ref{ergodicbehaviour}) holds) for the joint state-noise process will hold for less restrictive classes of noise. Secondly, it seems possible to attempt the generalization of this result for the noisy channel case. Using stabilization entropy techniques, \cite{garcia2021ergodicity} established the bound (\ref{oldbound}) for scalar systems controlled over Discrete Memoryless Channels. Given that sharper bounds can be established for multi-dimensional systems, it seems worthwhile to attempt to generalize the one dimensional noisy-channel result to many dimensions, and combine it with the arguments in this paper to sharpen the bound. At a first glance, there appear to be no significant technical challenges to overcome. Finally, we note that using the current method, it is not possible to establish the bound for the most general class of systems of the form $x_{t+1} = f(x_t,w_t,u_t)$. The reason for this is that the proof in this paper relies heavily on the fact that with additive control, volume growth of the map $x \mapsto f(x,w)+u$ does not depend on the choice of $u$. A future direction of investigation is to consider the most general class of systems, and impose restrictions on the type of causal coding and control policies in such a way so as to ensure that the control process has ergodic properties. We conclude by noting that we were unable to obtain the sharper bound established in this paper using information theoretic methods, with the main impediment being the fact that when splitting the state and conditioning on a past state realization, the unstable and stable state components may not be independent random variables, which is required for the information theoretic methods to apply. 

\bibliographystyle{IEEEtran}
\bibliography{IEEEabrv,myreferences,SerdarBibliography}

\addtolength{\textheight}{-12cm}

\end{document}